\newcommand{\Z}{{\mathbb Z}}
\newcommand{\Q}{{\mathbb Q}}
\newcommand{\R}{{\mathbb R}}
\newcommand{\C}{{\mathbb C}}
\newtheorem{theo}{Theorem}
\newtheorem{prop}{Proposition}
\newtheorem{lm}{Lemma}
\theoremstyle{definition}
\newtheorem{que}{Question}
\newtheorem{ex}{Example}
\theoremstyle{remark}
\newtheorem{rem}{Remark}
\DeclareMathOperator{\GL}{GL}
\date{}
\begin{document}

\title{A short note about diffuse Bieberbach groups}
\author{
Rafa{\l} Lutowski\thanks{Institute of Mathematics, University of Gda\'nsk, ul. Wita Stwosza 57, 80-952 Gda\'nsk, Poland}, \ Andrzej Szczepa\'nski\footnotemark[1]{} \ and 
Anna G\k{a}sior\thanks{Institute of Mathematics, Maria Curie-Sk{\l}odowska University, Pl. Marii Curie-Sk{\l}odowskiej 1, 20-031 Lublin, Poland}}


\maketitle
\thispagestyle{firststyle}

\begin{abstract}
We consider low dimensional diffuse Bieberbach groups. In particular we classify diffuse Bieberbach groups up to dimension 6. We also answer a question from \cite[page 887]{KRD} about the minimal dimension of a non-diffuse Bieberbach group which does not contain the three-dimensional Hantzsche-Wendt group.
\end{abstract}

\renewcommand{\thefootnote}{}
\footnotetext{\emph{E-mail addresses:} rafal.lutowski@mat.ug.edu.pl, aszczepa@mat.ug.edu.pl, anna.gasior@poczta.umcs.lublin.pl }
\footnotetext{2010 \emph{Mathematics Subject Classification}: Primary 20E40; Secondary 20H15, 20F65.}
\footnotetext{\emph{Key words and phrases}: unique product property; diffuse groups, Bieberbach groups.}
\renewcommand{\thefootnote}{\arabic{footnote}}

\section{Introduction}
The class of diffuse groups was introduced by B. Bowditch in \cite{Bow}.
By definition a group $\Gamma$ is \emph{diffuse}, if every finite non-empty subset $A\subset\Gamma$ has an
extremal point, i.e. an element $a\in A$ such that for any $g\in\Gamma\setminus\{1\}$ either $ga$ or $g^{-1}a$
is not in $A.$ Equivalently (see \cite{KRD}) a group $\Gamma$ is diffuse if it does not
contain a non-empty finite set without extremal points. 

The interest in diffuse groups follows from Bowditch's observation that they have the unique product property
\footnote{The group $\Gamma$ is said to have the unique product property if for every two finite non-empty subsets $A,B\subset\Gamma$
there is an element in the product $x\in A\dot B$ which can be written uniquely in the form $x = ab$ with $a\in A$ and $b\in B.$}.
Originally unique products were introduced in the study of group rings of discrete, torsion-free groups.
More precisely, it is easily seen that if a group $\Gamma$ has the unique product property, then it satisfies
Kaplansky's unit conjecture. In simple terms this means that the units in the group ring $\C[\Gamma]$ are all trivial,
i.e. of the form $\lambda g$ with $\lambda\in\C^{\ast}$ and $g\in\Gamma.$ 
For more information about these objects we refer the reader to
\cite{BLR}, \cite[Chapter 10]{L} and  \cite{KRD}. 
In part 3 of \cite{KRD} the authors prove that any torsion-free crystallographic group (Bieberbach group)
with trivial center is not diffuse. By definition a crystallographic group is a discrete and cocompact subgroup of the
group $O(n)\ltimes\R^n$ of isometries of the Euclidean space $\R^n.$ 
From Bieberbach's theorem (see \cite{S}) the normal subgroup $T$ of all translations of any crystallographic group
$\Gamma$ is a free abelian group of finite rank and the quotient group (holonomy group)  $\Gamma/T = G$ is finite.

In  \cite[Theorem 3.5]{KRD} it is proved that for a finite group $G$:

\begin{enumerate}
\item If $G$ is not solvable then any Bieberbach group with holonomy group isomorphic to $G$ \emph{is not} diffuse.
\item If every Sylow subgroup of $G$ is cyclic then any Bieberbach group with holonomy group isomorphic to $G$ \emph{is} diffuse.
\item If $G$ is solvable and has a non-cyclic Sylow subgroup then there are examples of Bieberbach groups with holonomy group isomorphic to $G$ which \emph{are} and examples which \emph{are not} diffuse.
\end{enumerate}

Using the above the authors of \cite{KRD} classify non-diffuse Bieberbach groups in dimensions $\leq 4.$
One of the most important non-diffuse groups is the 3-di\-men\-sio\-nal Hantzsche-Wendt group, denoted in \cite{P} by $\Delta_P$. For the following presentation
\[\Delta_{P} = \langle x,y\mid x^{-1}y^{2}x = y^{-2}, y^{-1}x^{2}y = x^{-2}\rangle\] 
the maximal abelian normal subgroup is generated by $x^2,y^2$ and $(xy)^2$ (see \cite[page 154]{H}).
At the end of part 3.4 of \cite{KRD} the authors ask the following question.

\begin{que}\label{HW}
What is the smallest dimension $d_0$ of a non-diffuse Bieberbach group which does not contain $\Delta_{P}$ ?
\end{que}

The answer for the above question was the main motivation for us. In fact we prove, in the next section, that $d_{0} = 5.$
Moreover, we extend the results of part 3.4 of \cite{KRD} and with support of computer, we present the classification of all Bieberbach groups in dimension 
$d\leq 6$ which are (non)diffuse. 

\section{(Non)diffuse Bieberbach groups in dimension $\leq 6.$}

We use the computer system CARAT \cite{Car} to list all Bieberbach groups of dimension $\leq 6.$

Our main tools are the following observations:
\begin{enumerate}
\item The property of being diffuse is inherited by subgroups
(see \cite[page 815]{Bow}).
\item If $\Gamma$ is a torsion-free group, $N\lhd\Gamma$ such that $N$ and $\Gamma/N$ are both diffuse then $\Gamma$ is diffuse (see \cite[Theorem 1.2 (1)]{Bow}).
\end{enumerate}

Now let $\Gamma$ be a Bieberbach group of dimension less than or equal to $6$. By the first Betti number $\beta_1(\Gamma)$ we mean the rank of the abelianization $\Gamma/[\Gamma,\Gamma]$. Note that we are only interested in the case when $\beta_1(\Gamma) > 0$ (see \cite[Lemma 3.4]{KRD}). Using a method of E. Calabi \cite[Propostions 3.1 and 4.1]{S},
we get an epimorphism 
\begin{equation}\label{calabi}
f:\Gamma\to\Z^{k}, \text{ where } k = \beta_1(\Gamma). 
\end{equation}
From the assumptions $\ker f$ is a Bieberbach
group of dimension $< 6.$ Since $\Z^{k}$ is a diffuse group our problem is reduced to the question about the group $\ker f.$

\begin{rem}
Up to conjugation in $\GL(n+1,\R)$, $\Gamma$ is a subgroup $\GL(n,\Z) \ltimes \Z^n \subset \GL(n+1,\Q)$, i.e. it is a group of matrices of the form
\[
\begin{bmatrix}
A & a\\
0 & 1
\end{bmatrix},
\]
where $A \in \GL(n,\Z), a \in \Q^n$. If $p \colon \Gamma \to \GL(n,\Z)$ is a homomorphism which takes the linear part of every element of $\Gamma$
\[
p\left(\begin{bmatrix}A&a\\0&1\end{bmatrix}\right) = A \text{ for every } \begin{bmatrix}A&a\\0&1\end{bmatrix} \in \Gamma,
\]
then there is an isomorphism $\rho \colon G \to p(\Gamma) \subset \GL(n,\Z)$. It is known that that the rank of the center of a Bieberbach group equals the first Betti number (see \cite[Proposition 1.4]{HS}). By \cite[Lemma 5.2]{S}, the number of trivial constituents of the representation $\rho$ is equal to $k$. Hence without lose of generality we can assume that the matrices in $\Gamma$ are of the form
\[
\begin{bmatrix}
A & B & a\\
0 & I & b\\
0 & 0 & 1
\end{bmatrix}
\]
where $A \in \GL(n-k,\Z)$, $I$ is the identity matrix of degree $k$, $B$ is an integral matrix of dimension $n-k \times k$, $a \in \Q^{n-k}$ and $b \in \Q^k$. Then $f$ may be defined by
\[
f\left(
\begin{bmatrix}
A & B & a\\
0 & I & b\\
0 & 0 & 1
\end{bmatrix}
\right)
= b
\]
and one can easily see that the map $F \colon \ker f \to \GL(n-k+1,\Q)$ given by
\[
F\left(
\begin{bmatrix}
A & B & a\\
0 & I & 0\\
0 & 0 & 1
\end{bmatrix}
\right)
=
\begin{bmatrix}
A  & a\\
0  & 1
\end{bmatrix}
\]
is a monomorphism and hence its image is a Bieberbach group of rank $n-k$.
\end{rem}

Now if $\Gamma$ has rank $4$ we know that the only non-diffuse Bieberbach group of dimension less than or equal to $3$ is $\Delta_{P}.$
Using the above facts we obtain 17 non-diffuse groups. Note that the list from \cite[section 3.4]{KRD} consists of 16 groups. 
The following example presents the one which is not in \cite{KRD} and illustrates computations given in the above remark.

\begin{ex}
Let $\Gamma$ be a crystallographic group denoted by "05/01/06/006" in \cite{BBNWZ} as a subgroup of $\operatorname{GL}(5,\R)$. Its non-lattice generators are as follows
\[
A =
\begin{bmatrix}
  0 &  -1 &  1 &   0 &  1/2\\
 -1 &   0 &  1 &   0 &    0\\
  0 &   0 &  1 &   0 &  1/2\\
  0 &   0 &  0 &  -1 &  1/2\\
  0 &   0 &  0 &   0 &    1\\
\end{bmatrix}
\text{ and }
B =
\begin{bmatrix}
  0 &  1 &  -1 &   0 &  1/2\\
  0 &  1 &   0 &   0 &  1/2\\
 -1 &  1 &   0 &   0 &    0\\
  0 &  0 &   0 &  -1 &    0\\
  0 &  0 &   0 &   0 &    1\\
\end{bmatrix}.
\]
Conjugating the above matrices by
\[
Q=
\begin{bmatrix}
 1 &  1 &  0 &  0 &  0\\
 0 &  1 &  0 &  0 &  0\\
 1 &  0 &  0 &  1 &  0\\
 0 &  0 &  1 &  0 &  0\\
 0 &  0 &  0 &  0 &  1\\
\end{bmatrix}
\in \operatorname{GL}(5,\mathbb{Z})
\]
one gets
\[
A^Q =
\begin{bmatrix}
 1 &   0 &   0 &  0 &  1/2\\
 0 &  -1 &   0 &  1 &    0\\
 0 &   0 &  -1 &  0 &  1/2\\
 0 &   0 &   0 &  1 &    0\\
 0 &   0 &   0 &  0 &    1\\
\end{bmatrix}
\text{ and }
B^Q =
\begin{bmatrix}
 -1 &  0 &   0 &  -1 &    0\\
  0 &  1 &   0 &   0 &  1/2\\
  0 &  0 &  -1 &   0 &    0\\
  0 &  0 &   0 &   1 &    0\\
  0 &  0 &   0 &   0 &    1\\
\end{bmatrix}.
\]
Now its easy to see that the rank of the center of $\Gamma$ equals $1$ and the kernel of 
the epimorphism $\Gamma \to \mathbb{Z}$ is isomorphic to a 3-dimensional Bieberbach group $\Gamma'$ with the following non-lattice generators:
\[
A' =
\begin{bmatrix}
 1 &   0 &   0 & 1/2\\
 0 &  -1 &   0 &   0\\
 0 &   0 &  -1 & 1/2\\
 0 &   0 &   0 &   1\\
\end{bmatrix}
\text{ and }
B' =
\begin{bmatrix}
 -1 &  0 &   0 &   0\\
  0 &  1 &   0 & 1/2\\
  0 &  0 &  -1 &   0\\
  0 &  0 &   0 &   1\\
\end{bmatrix}
\]
Clearly the center of $\Gamma'$ is trivial, hence it is isomorphic to the group $\Delta_{P}.$
\end{ex}
Now we formulate our main result.

\begin{theo}
The following table summarizes the number of diffuse and non-diffuse Bieberbach groups of dimension $\leq 6$.

\begin{center}
\normalfont
\begin{tabular}{r|r|r|r}
Dimension&Total&Non-diffuse&Diffuse\\  \hline
   1 &     1 &    0 &             1\\
   2 &     2 &    0 &             2\\
   3 &    10 &    1 &             9\\
   4 &    74 &   17 &            57\\
   5 &  1060 &  352 &           708\\
   6 & 38746 &19256 &         19490\\
\end{tabular}
\end{center}
\end{theo}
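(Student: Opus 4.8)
The plan is to reduce everything to a finite, computer-checkable verification, exploiting the two hereditary properties of diffuseness stated above together with the Calabi-type reduction \eqref{calabi}. First I would use CARAT \cite{Car} to enumerate, dimension by dimension, all Bieberbach groups up to dimension $6$; the total counts in the first column of the table are then just the output of this enumeration (and agree with the known classifications in dimensions $\leq 4$). For each group $\Gamma$ in the list, the task is to decide whether it is diffuse. The case $\beta_1(\Gamma)=0$ is settled immediately: by \cite[Lemma 3.4]{KRD} (equivalently, since the center has rank $\beta_1(\Gamma)=0$, cf.\ \cite[Proposition 1.4]{HS}), such a $\Gamma$ is not diffuse. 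So assume $\beta_1(\Gamma)=k>0$.

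Next I would run the reduction of the Remark: produce the epimorphism $f\colon\Gamma\to\Z^k$ and its kernel $K=\ker f$, which by the Remark is (isomorphic to) a Bieberbach group of dimension $n-k<n$. By observation (2) (Bowditch's extension result, \cite[Theorem 1.2(1)]{Bow}), since $\Z^k$ is diffuse, $\Gamma$ is diffuse as soon as $K$ is; and by observation (1), if $\Gamma$ is diffuse then so is $K$. Hence $\Gamma$ is diffuse \emph{if and only if} $K$ is diffuse. This gives a recursion on dimension: maintain, for each dimension $d$ already processed, the sublist of non-diffuse groups, and to classify dimension $d$ simply test whether the (lower-dimensional) group $K$ attached to each $\Gamma$ is in the stored non-diffuse sublist. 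The base of the recursion is dimensions $1,2,3$, where the only non-diffuse group is $\Delta_P$ (dimension $3$), as already recalled; in particular this is why the dimension-$1$, $2$, $3$ rows read $0,0,1$.

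The one genuine subtlety—and the step I expect to be the main obstacle—is that the recursion as stated only certifies non-diffuseness by pulling it back from a lower dimension; it does not by itself prove that the groups \emph{not} flagged are actually diffuse. For that I would invoke \cite[Theorem 3.5]{KRD}: a group whose holonomy group $G$ has all Sylow subgroups cyclic is diffuse (item 2), so these can be discharged directly from the holonomy data computed by CARAT. The remaining groups—non-cyclic-Sylow solvable holonomy, and $\beta_1>0$—are exactly the ones the recursion handles: each such $\Gamma$ is diffuse iff the strictly-lower-dimensional Bieberbach group $K=\ker f$ is, and by induction on $d$ the diffuse/non-diffuse status of every Bieberbach group of dimension $<d$ has already been determined, so no group of dimension $\le 6$ escapes classification. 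One must also check that groups with non-solvable holonomy do not arise here below dimension $6$ in a way that breaks the scheme—by item (1) of \cite[Theorem 3.5]{KRD} they are non-diffuse, and with $\beta_1>0$ they too fall under the recursion. Implementing the bookkeeping correctly (isomorphism testing of the kernels against the stored lists, handling of the conjugation into the block-triangular form of the Remark, and the arithmetic with $\GL(n,\Z)\ltimes\Z^n$) is the laborious part; once it is done, counting the non-diffuse groups in each dimension produces the middle column $0,0,1,17,352,19256$, and the last column is obtained by subtraction, yielding the table. As a by-product, the single $5$-dimensional non-diffuse group exhibited in the Example whose kernel is $\Delta_P$, contrasted with the existence (forced by the counts) of $5$-dimensional non-diffuse groups whose associated lower-dimensional kernel is \emph{not} $\Delta_P$, is what pins down $d_0=5$ in Question \ref{HW}.
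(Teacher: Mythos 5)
Your proposal is correct and takes essentially the same route as the paper, whose entire proof is: a Bieberbach group with trivial center ($\beta_1=0$) is not diffuse by \cite{KRD}, and otherwise one applies the Calabi epimorphism (\ref{calabi}) and induction on dimension, with the enumeration and bookkeeping delegated to CARAT/GAP. Two small caveats: the ``genuine subtlety'' you raise is illusory, since your own biconditional ($\Gamma$ is diffuse if and only if $\ker f$ is) already certifies diffuseness by induction, so the detour through \cite[Theorem 3.5]{KRD} and the Sylow/non-solvable holonomy discussion is unnecessary; and the counts do not by themselves force the existence of a $5$-dimensional non-diffuse group with no subgroup isomorphic to $\Delta_P$ (a group may contain $\Delta_P$ other than as the Calabi kernel), which is why the paper settles Question~\ref{HW} with a separate explicit example rather than as a by-product of the table.
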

\noindent
\begin{proof}
If a group has a trivial center then it is not diffuse. In other case we use the Calabi (\ref{calabi}) method and induction.
A complete list of groups was obtained using computer algebra system GAP \cite{GAP} and is available here \cite{RL}.
\end{proof}

Before we answer Question~\ref{HW} from the introduction, let us formulate the following lemma:
\begin{lm}
\label{lm:hwpres}
Let $\alpha, \beta$ be any generators of the group $\Delta_{P}$. Let $\gamma = \alpha \beta, a=\alpha^2, b=\beta^2, c=\gamma^2$. Then the following relations hold:
\begin{equation}
\label{eq:hwrels}
\begin{array}{llll}
[a,b]=1 &                   & a^\beta = a^{-1} & a^\gamma = a^{-1} \cr
[a,c]=1 & b^\alpha = b^{-1} &                  & b^\gamma = b^{-1} \cr
[b,c]=1 & c^\alpha = c^{-1} & c^\beta = c^{-1} &                   \cr 
\end{array}
\end{equation}
where $x^y := y^{-1}xy$ denotes the conjugation of $x$ by $y$.
\end{lm}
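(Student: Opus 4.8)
The plan is to deduce everything from the structure of the holonomy representation of $\Delta_P$ and to handle an arbitrary generating pair by first analysing its image in the holonomy group. Recall the facts we may use: $\Delta_P$ is the three‑dimensional Hantzsche–Wendt group, so its translation subgroup $T$ is free abelian of rank $3$, the holonomy group $G = \Delta_P/T$ is isomorphic to $\Z_2 \times \Z_2$, and in suitable coordinates the non‑trivial elements of $G$ act on $T$ as the diagonal matrices $\mathrm{diag}(1,-1,-1)$, $\mathrm{diag}(-1,1,-1)$, $\mathrm{diag}(-1,-1,1)$ — equivalently, each non‑trivial holonomy element is an involution whose diagonal carries exactly one $+1$ (this is precisely the realization appearing in the Example above, and can also be read off from the defining presentation). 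We also use that $\Delta_P$ is torsion‑free and that, writing isometries as pairs $(R,v)$, one has $(R,v)^2 = (R^2,(R+I)v)$ and $(R,v)(I,t)(R,v)^{-1} = (I,Rt)$.

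First I would locate $\bar\alpha := \alpha T$ and $\bar\beta := \beta T$ in $G$. If either of $\alpha,\beta$ lay in $T$, or if $\bar\alpha = \bar\beta$, then the image of $\langle \alpha,\beta\rangle$ in $G \cong \Z_2 \times \Z_2$ would be cyclic, hence proper, contradicting $\langle\alpha,\beta\rangle = \Delta_P$. Therefore $\bar\alpha,\bar\beta$ are two distinct non‑trivial elements of $G$ and $\bar\gamma = \bar\alpha\bar\beta$ is the third one, so $\bar\alpha,\bar\beta,\bar\gamma$ are exactly the three non‑trivial elements of $G$. Since $\bar a = \bar\alpha^{2} = 1$ and likewise for $b$ and $c$, all three of $a,b,c$ lie in the abelian group $T$; this already gives $[a,b] = [a,c] = [b,c] = 1$.

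For the conjugation relations, write $\alpha = (h_\alpha, v)$ with $h_\alpha = \bar\alpha$ a non‑trivial diagonal involution. The squaring formula gives $a = (I,(h_\alpha + I)v)$, and since $h_\alpha$ is an involution, $h_\alpha + I$ is twice the projection onto its $(+1)$‑eigenline $E_+(h_\alpha)$; hence $a$ is translation by a vector $t_a \in E_+(h_\alpha)$ (non‑zero, because $\Delta_P$ is torsion‑free, although this is not needed for the relations themselves). By the conjugation formula, for any $g$ with linear part $h_g$ we get $a^{g} = (I, h_g t_a)$. Now if $h_g$ is a non‑trivial holonomy element distinct from $h_\alpha$, its unique $+1$ sits at a coordinate different from that of $h_\alpha$, so $h_g$ acts as $-1$ on $E_+(h_\alpha)$; hence $h_g t_a = -t_a$ and $a^{g} = a^{-1}$. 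Taking $g = \beta$ and $g = \gamma$ yields $a^\beta = a^\gamma = a^{-1}$, and the remaining relations $b^\alpha = b^\gamma = b^{-1}$ and $c^\alpha = c^\beta = c^{-1}$ follow by the same argument with the roles of $\alpha,\beta,\gamma$ permuted.

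The only point requiring care is that $\alpha,\beta$ form an \emph{arbitrary} generating pair rather than the presentation generators $x,y$; this is dealt with entirely in the second step, together with the coordinate‑free observation that distinct non‑trivial elements of the Hantzsche–Wendt holonomy group have transverse $(+1)$‑eigenlines. Beyond keeping that description straight, I do not expect any genuine obstacle, as no real computation is involved.
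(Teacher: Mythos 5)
Your proof is correct. The paper omits the argument entirely, offering only the hint that the relations ``are easily checked'' in the standard $4\times 4$ affine matrix realization of $\Delta_P$ with its presentation generators $x,y$ --- which, taken literally, only verifies the relations for that one generating pair, whereas the lemma (and its later application to a hypothetical copy of $\Delta_P$ inside a $5$-dimensional group) requires them for an \emph{arbitrary} generating pair. Your argument supplies exactly the missing reduction: since $\Delta_P/T\cong\Z_2\times\Z_2$, any generating pair must project to two distinct non-trivial holonomy elements, so $\bar\alpha,\bar\beta,\bar\gamma$ exhaust the non-trivial holonomy elements; then $a,b,c$ lie in the abelian lattice $T$ (giving the commutators), and the squaring formula $(R,v)^2=(I,(R+I)v)$ places each of $t_a,t_b,t_c$ on the $(+1)$-eigenline of the corresponding holonomy involution $\operatorname{diag}(\pm1,\pm1,\pm1)$, on which the other two involutions act by $-1$ (and $h^{-1}=h$, so the left/right conjugation convention is immaterial). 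This is the same computation the paper has in mind, organized so that it visibly does not depend on the choice of generators; the one fact you should cite or check explicitly is that the three non-trivial holonomy elements are simultaneously diagonalizable over $T$ with exactly one $+1$ each, which is read off from the matrix model the paper displays.
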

The proof of the above lemma is omitted. Just note that the relations are easily checked if consider the following representation of $\Delta_P$ as a matrix group
\[
\left\langle
x=
\begin{bmatrix}
 1 &  0 &  0 & 1/2\\
 0 & -1 &  0 & 1/2\\
 0 &  0 & -1 &   0\\
 0 &  0 &  0 &   1
\end{bmatrix},
y=
\begin{bmatrix}
-1 &  0 &  0 &   0\\
 0 &  1 &  0 & 1/2\\
 0 &  0 & -1 & 1/2\\
 0 &  0 &  0 &   1
\end{bmatrix}
\right\rangle
\subset \GL(4,\Q).
\]

\begin{prop}
There exists an example of a five dimensional non-diffuse Bieberbach group which does not contain any subgroup isomorphic to $\Delta_{P}$.
\end{prop}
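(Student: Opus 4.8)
The plan is to exhibit an explicit five-dimensional Bieberbach group $\Gamma$ and to verify two separate things about it: that $\Gamma$ is not diffuse, and that $\Gamma$ has no subgroup isomorphic to $\Delta_{P}$. For the first point I would choose $\Gamma$ to have trivial center, so that non-diffuseness is immediate from the result of \cite{KRD} (part 3) recalled in the introduction. This choice is natural, and in fact essentially forced: if $\beta_1(\Gamma)>0$ then, by the Calabi method (\ref{calabi}) together with the two observations on diffuseness quoted above, $\Gamma$ is non-diffuse if and only if $\ker f$ is, and $\ker f$ is a Bieberbach group of dimension $\le 4$; since every non-diffuse Bieberbach group of dimension $\le 4$ contains $\Delta_{P}$, so would $\Gamma$. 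Hence the sought example must have first Betti number $0$.

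The core of the argument is the following observation, which uses only Lemma~\ref{lm:hwpres}: \emph{if a Bieberbach group $\Gamma$ with translation lattice $T$ and holonomy group $G=\Gamma/T$ contains a subgroup $\Delta\cong\Delta_{P}$, then $\Z_2\times\Z_2$ is a quotient of some subgroup of $G$.} Indeed, $\Delta\cap T$ is a normal subgroup of $\Delta$ which is free abelian (being contained in $T$) and of finite index in $\Delta$ (since $[\Gamma:T]=|G|<\infty$). Writing $\alpha,\beta$ for generators of $\Delta\cong\Delta_{P}$ and $\gamma=\alpha\beta$, $a=\alpha^2$, $b=\beta^2$, $c=\gamma^2$ as in Lemma~\ref{lm:hwpres}, every element of $\Delta$ outside the rank-$3$ subgroup $\langle a,b,c\rangle$ lies in one of the three nontrivial cosets of $\langle a,b,c\rangle$ and therefore, by the relations (\ref{eq:hwrels}), inverts one of $a,b,c$; such an element cannot commute with a finite-index subgroup of $\langle a,b,c\rangle$. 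Since $\Delta\cap T$ is abelian of finite index, it must be contained in $\langle a,b,c\rangle$, and hence $\Delta T/T\cong\Delta/(\Delta\cap T)$ surjects onto $\Delta/\langle a,b,c\rangle\cong\Z_2\times\Z_2$.

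Consequently, if the Sylow $2$-subgroup of $G$ is cyclic then $\Gamma$ contains no copy of $\Delta_{P}$: a $2$-group which is a quotient of a subgroup $H\le G$ is a quotient of a Sylow $2$-subgroup of $H$, which is cyclic, so it cannot be $\Z_2\times\Z_2$; in particular this applies whenever $|G|$ is odd. So it suffices to locate, in the CARAT list of five-dimensional Bieberbach groups, a group $\Gamma$ with trivial center whose holonomy group has a cyclic Sylow $2$-subgroup, and to record explicit matrix generators for it and for its holonomy group. Note that $G$ must then have a non-cyclic Sylow subgroup at some odd prime, since a Bieberbach group with trivial center and all Sylow subgroups of $G$ cyclic cannot exist (it would be diffuse by \cite[Theorem 3.5]{KRD}, contradicting \cite{KRD}, part 3); so $|G|$ is divisible by $9$ (or by a larger odd square).

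The step I expect to be the real obstacle is precisely this last one — producing an honest five-dimensional Bieberbach group with trivial center and holonomy group of the required shape. The constraint is genuine: the holonomy representation on $\Q^{5}$ must be faithful with no trivial constituent, so $G$ must admit a nontrivial rational irreducible representation of odd degree (which, for instance, excludes $G\cong(\Z/3)^{2}$), and in addition one needs a torsion-free (special) class in $H^{2}(G;\Z^{5})$. Checking that such a $\Gamma$ exists, and exhibiting a concrete one, is exactly what the computer computation is needed for, and it is the part of the proof least amenable to a short argument by hand; everything else reduces to the elementary group-theoretic considerations above.
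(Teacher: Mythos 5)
Your structural lemma is correct, and it is in fact the same fact the paper extracts from its commutative diagrams: since $\Delta\cap T$ is a finite-index abelian normal subgroup of $\Delta\cong\Delta_P$, it must lie in $\langle a,b,c\rangle$, so $\Delta T/T$ surjects onto $\Z_2\times\Z_2$ and the holonomy group $G$ has a non-cyclic $2$-subgroup. Your reduction to the trivial-center case is also right (and matches the paper's choice). The fatal gap is the existence step you defer to a computer search: \emph{no} five-dimensional non-diffuse Bieberbach group has a holonomy group with cyclic (or trivial) Sylow $2$-subgroup, so the group you propose to locate in CARAT does not exist and the argument cannot be completed along these lines. Indeed, suppose $\Gamma$ is such a group with lattice $L$ and holonomy $G$; as you note, $b_1(\Gamma)=0$. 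If the Sylow $2$-subgroup of $G$ is nontrivial cyclic, Burnside's transfer theorem gives a normal $2$-complement, and by the Hiller--Sah theorem \cite{HS} a finite group possessing a cyclic Sylow $p$-subgroup with a normal $p$-complement is never the holonomy group of a flat manifold with $b_1=0$. If instead $|G|$ is odd, then by \cite[Theorem~3.5]{KRD} some Sylow subgroup is non-cyclic, so $E=\Z_p\times\Z_p\le G$ for an odd prime $p$; an element of prime order $q$ in $\GL(5,\Q)$ forces $q-1\le 5$, and a short check shows $|G|$ divides $45$ with $E=\Z_3\times\Z_3$ a Sylow $3$-subgroup, whence $n_3\mid 5$ and $n_3\equiv 1\ (\mathrm{mod}\ 3)$ give $E\lhd G$. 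Torsion-freeness forces $L^{C}\neq 0$ for each of the four order-$3$ subgroups $C\le E$ (otherwise $H^2(C;L)=0$ and the restricted extension splits), and together with faithfulness of $E$ on $\Q^5$ this pins down $L\otimes\Q|_E\cong \mathbf{1}\oplus V_i\oplus V_j$; thus $(L\otimes\Q)^E$ is a $G$-invariant line on which the odd-order group $G$ acts through $\{\pm1\}$, hence trivially, giving $L^G\neq 0$ and $b_1(\Gamma)>0$ --- a contradiction.

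In other words, your sufficient condition for excluding $\Delta_P$ is vacuous in the only setting where it is needed: every non-diffuse five-dimensional Bieberbach group has a subgroup of its holonomy group surjecting onto $\Z_2\times\Z_2$, so the obstruction can never be detected at the level of $G$ alone. This is precisely why the paper's proof cannot avoid the translational parts: its example has holonomy $D_8$, it uses the same diagram-level analysis to reduce to finitely many candidate cosets for generators $\alpha,\beta$ of a putative copy of $\Delta_P$, and then kills each case by showing that a relation from Lemma~\ref{lm:hwpres} (namely $c^\alpha=c^{-1}$, respectively $[a,b]=1$) forces a non-integral solution in the translation vectors. Any correct proof of the proposition must contain a computation of this kind (or an equivalent cohomological one); the purely group-theoretic shortcut you propose cannot work.
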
 
\begin{proof}
Let $\Gamma$ be the Bieberbach group enumerated in CARAT as "min.88.1.1.15". It generated by the elements $\gamma_1, \gamma_2,l_1,\ldots,l_5$ where
\[
\gamma_1 =
\begin{bmatrix}
 0 &  1 &  0 &   0 &   0 &    0\\
 1 &  0 &  0 &   0 &   0 &    0\\
 0 &  0 &  1 &   0 &   0 &  1/2\\
 0 &  0 &  0 &  -1 &   0 &  1/4\\
 0 &  0 &  0 &   0 &  -1 &    0\\
 0 &  0 &  0 &   0 &   0 &    1\\
\end{bmatrix}
\text{ and }
\gamma_2 =
\begin{bmatrix}
 1 &   0 &   0 &   0 &  0 &  1/2\\
 0 &  -1 &   0 &   0 &  0 &    0\\
 0 &   0 &  -1 &   0 &  0 &    0\\
 0 &   0 &   0 &  -1 &  0 &    0\\
 0 &   0 &   0 &   0 &  1 &  1/2\\
 0 &   0 &   0 &   0 &  0 &    1\\
\end{bmatrix}
\]
and $l_1,\ldots,l_5$ generate the lattice $L$ of $\Gamma$:
\[
l_i := 
\begin{bmatrix}
I_5 & e_i\\
0   & 1
\end{bmatrix}
\]
where $e_i$ is the $i$-th column of the identity matrix $I_5$. $\Gamma$ fits into the following short exact sequence
\[
1 \longrightarrow L \longrightarrow \Gamma \stackrel{\pi}{\longrightarrow} D_8 \longrightarrow 1
\]
where $\pi$ takes the linear part of every element of $\Gamma$:
\[
\begin{bmatrix}
A & a\\
0 & 1
\end{bmatrix}
\mapsto A
\]
and the image $D_8$ of $\pi$ is the dihedral group of order 8.

Now assume that $\Gamma'$ is a subgroup of $\Gamma$ isomorphic to $\Delta_P$. Let $T$ be its maximal normal abelian subgroup. Then $T$ is free abelian group of rank $3$ and $\Gamma'$ fits into the following short exact sequence
\[
1 \longrightarrow T \longrightarrow \Gamma' \longrightarrow C_2^2 \longrightarrow 1,
\]
where $C_m$ is a cyclic group of order $m$. Consider the following commutative diagram
\[
\begin{tikzcd}[ampersand replacement=\&]
1 \arrow{r} \& T \cap L \arrow{r}\arrow{d} \& T \arrow{r} \arrow{d} \& H = \pi(T) \arrow{r} \arrow[d,dash,shift right=.1em] \& 1\\
1 \arrow{r} \& L \arrow{r} \& \pi^{-1}(H) \arrow{r} \& H  \arrow{r} \arrow[u,dash,shift right=.1em] \& 1\\
\end{tikzcd}
\]
We get that $H$ must be an abelian subgroup of $D_8 = \pi(\Gamma)$ and $T\cap L$ is a free abelian group of rank $3$ which lies in the center of $\pi^{-1}(H) \subset \Gamma$. Now if $H$ is isomorphic to either to $C_4$ or $C_2^2$ then the center of $\pi^{-1}(H)$ is of rank at most $2$. Hence $H$ must be the trivial group or the cyclic group of order $2$. Note that as $\Gamma' \cap L$ is a normal abelian subgroup of $\Gamma'$ it must be a subgroup of $T$:
\[
T \cap L \subset \Gamma' \cap L \subset T \cap L,
\]
hence $T \cap L = \Gamma' \cap L$.
We get the following commutative diagram with exact rows and columns
\[
\begin{tikzcd}[ampersand replacement=\&]
 \& 1 \arrow{d} \& 1 \arrow{d} \& 1 \arrow{d}\\
1 \arrow{r} \& T \cap L \arrow{r}\arrow{d} \& T \arrow{r}\arrow{d}  \& H \arrow{r}\arrow{d} \& 1\\
1 \arrow{r} \& \Gamma' \cap L \arrow{r}\arrow{d} \& \Gamma' \arrow{r}\arrow{d}  \& G \arrow{r}\arrow{d} \& 1\\
1 \arrow{r} \& 1 \arrow{r}\arrow{d} \& C_2^2 \arrow{r}\arrow{d}  \& C_2^2 \arrow{r} \arrow{d} \& 1\\
 \& 1 \& 1 \& 1\\
\end{tikzcd}
\]
where $G = \pi(\Gamma')$. 
Consider two cases:
\begin{enumerate}
\item $H$ is trivial. In this case $G$ is one of the two subgroups of $D_8$ isomorphic to $C_2^2$. Since the arguments for both subgroups are similar, we present only one of them. Namely, let
\[
G = \left\langle
\operatorname{diag}(1,-1,-1,-1,1),
\operatorname{diag}(-1,-1,1,1,1).
\right\rangle
\] 
In this case $\Gamma'$ is generated by the matrices of the form
\[
\alpha = 
\begin{bmatrix}1 & 0 & 0 & 0 & 0 & {{x}_{1}}-\frac{1}{2}\\
0 & -1 & 0 & 0 & 0 & {{x}_{2}}\\
0 & 0 & -1 & 0 & 0 & {{x}_{3}}\\
0 & 0 & 0 & -1 & 0 & {{x}_{4}}\\
0 & 0 & 0 & 0 & 1 & {{x}_{5}}-\frac{1}{2}\\
0 & 0 & 0 & 0 & 0 & 1\end{bmatrix}
\text{ and }
\beta = 
\begin{bmatrix}-1 & 0 & 0 & 0 & 0 & {{y}_{1}}+\frac{1}{2}\\
0 & -1 & 0 & 0 & 0 & {{y}_{2}}-\frac{1}{2}\\
0 & 0 & 1 & 0 & 0 & {{y}_{3}}\\
0 & 0 & 0 & 1 & 0 & {{y}_{4}}+\frac{1}{2}\\
0 & 0 & 0 & 0 & 1 & {{y}_{5}}\\
0 & 0 & 0 & 0 & 0 & 1\end{bmatrix},
\]
where $x_i,y_i \in \Z$ for $i=1,\ldots,5$. If $c = (\alpha\beta)^2$ then by Lemma \ref{lm:hwpres} $c^\alpha = c^{-1}$, but
\[
c^\alpha - c^{-1} =
\begin{bmatrix}0 & 0 & 0 & 0 & 0 & 0\\
0 & 0 & 0 & 0 & 0 & 0\\
0 & 0 & 0 & 0 & 0 & 0\\
0 & 0 & 0 & 0 & 0 & 0\\
0 & 0 & 0 & 0 & 0 & 4{{y}_{5}}+4{{x}_{5}}-2\\
0 & 0 & 0 & 0 & 0 & 0\end{bmatrix}.
\]
Obviously solutions of the equation $4{{y}_{5}}+4{{x}_{5}}-2=0$ are never integral and we get a contradiction.

\item $H$ is of order $2$. Then $G = D_8$ and $H$ is the center of $G$. The generators $\alpha,\beta$ of $\Gamma'$ lie in the cosets $\gamma_1\gamma_2L$ and $\gamma_2L$, hence
\[
\alpha = 
\begin{bmatrix}0 & -1 & 0 & 0 & 0 & {{x}_{1}}\\
1 & 0 & 0 & 0 & 0 & {{x}_{2}}-\frac{1}{2}\\
0 & 0 & -1 & 0 & 0 & {{x}_{3}}-\frac{1}{2}\\
0 & 0 & 0 & 1 & 0 & {{x}_{4}}+\frac{1}{4}\\
0 & 0 & 0 & 0 & -1 & {{x}_{5}}+\frac{1}{2}\\
0 & 0 & 0 & 0 & 0 & 1\end{bmatrix}
\text{ and }
\beta = 
\begin{bmatrix}1 & 0 & 0 & 0 & 0 & {{y}_{1}}-\frac{1}{2}\\
0 & -1 & 0 & 0 & 0 & {{y}_{2}}\\
0 & 0 & -1 & 0 & 0 & {{y}_{3}}\\
0 & 0 & 0 & -1 & 0 & {{y}_{4}}\\
0 & 0 & 0 & 0 & 1 & {{y}_{5}}-\frac{1}{2}\\
0 & 0 & 0 & 0 & 0 & 1\end{bmatrix}
\]
where $x_i,y_i \in \Z$ for $i=1,\ldots,5$, as before. Setting $a=\alpha^2, b=\beta^2$ we get
\[
ab-ba =
\begin{bmatrix}0 & 0 & 0 & 0 & 0 & 2-4{{y}_{1}}\\
0 & 0 & 0 & 0 & 0 & 0\\
0 & 0 & 0 & 0 & 0 & 0\\
0 & 0 & 0 & 0 & 0 & 0\\
0 & 0 & 0 & 0 & 0 & 0\\
0 & 0 & 0 & 0 & 0 & 0\end{bmatrix}
\]
and again the equation $ 2-4{{y}_{1}} = 0$ does not have an integral solution.
\end{enumerate}
The above considerations show that $\Gamma$ does not have a subgroup which is isomorphic to $\Delta_P$.
\end{proof}

\section*{Acknowledgements}
This work was supported by the Polish National Science Center grant 2013/09/B/ST1/04125.

\section*{Bibliography}

\end{document}